\begin{document}

\markboth{Deng, Jin, Kauffman}
{Graphical virtual links and a polynomial of signed cyclic graphs}
\catchline{}{}{}{}{}
\title{Graphical virtual links and a polynomial of signed cyclic graphs}

\author{Qingying Deng$^{\dag}$, Xian'an Jin$^{\dag}$\thanks{Corresponding author}, Louis H. Kauffman$^{\ddag}$}

\address{$^{\dag}$School of Mathematical Sciences,\\ Xiamen University, Xiamen, Fujian 361005, \\P. R. China \\
dqygzl@sina.com(Qingying Deng)}

\address{$^{\dag}$School of Mathematical Sciences,\\ Xiamen University, Xiamen, Fujian 361005, \\P. R. China \\
xajin@xmu.edu.cn(Xian'an Jin)}

\address{$^{\ddag}$Department of Mathematics, Statistics, and Computer Science,\\ University of Illinois at Chicago,  Chicago\\USA \\
loukau@gmail.com(Louis H. Kauffman)}

\maketitle

\begin{abstract}
For a signed cyclic graph $G$, we can construct a unique virtual link  $L$ by taking the medial construction and convert 4-valent vertices of the medial graph to crossings according to the signs.
If a virtual link can occur in this way then we say that the virtual link is $\textit{graphical}$.
In the article we shall prove that a virtual link $L$ is graphical if and only if it is checkerboard colorable.
On the other hand, we introduce a polynomial $F[G]$ for signed cyclic graphs, which is defined via a deletion-marking recursion.
We shall establish the relationship between $F[G]$ of a signed cyclic graph $G$ and the bracket polynomial of one of the virtual link diagrams associated with $G$.
Finally we give a spanning subgraph expansion for $F[G]$.
\end{abstract}

\keywords{virtual link; signed cyclic graph; orientable ribbon graphs; graphical; checkerboard colorable; $F[G]$ polynomial; bracket polynomial.}

\section{Introduction}
Virtual knot theory, introduced by Kauffman in \cite{Kau1}, is an extension of classical knot theory.
In knot theory, it is well known that signed plane graphs are, by a medial construction, in one-to-one correspondence with
diagrams for classical knots and links. Here a plane graph means a planar graph $G$ together with an embedding of this graph in the plane.
A $\textit{cyclic graph}$ consists of an abstract graph $G$ with a cyclic ordering of the half-edges at each vertex, or, equivalently, it is a cellularly embedding of a graph into a closed orientable surface.
The theory of cyclic graphs is equivalent to the theory of graphs on oriented surfaces.
Note that here we ignore the way a cyclic graph is embedded into $\mathbb{R}^3$.
However, if it is impossible to embed a cyclic graph in plane, we
may immerse it by marking artifacts of the immersion (we assume the immersion to be
generic) by small circles (virtual crossings).
Note that there are a lot of immersions, but we just need to choose one.
Without loss of generality, we consider that the cyclic ordering of each vertex is counterclockwise.
A $\textit{signed graph}$ is a graph whose edges are labelled with + or -. A $\textit{signed cyclic graph}$ is a cyclic graph with edge signs.
For a 2-cellularly embedded graph into a closed orientable surface, we can construct a link diagram on the surface via the medial construction, which represents a unique virtual link.
{There are many ways of immersing a signed cyclic graph into the plane.
For each such immersion, we can obtain a virtual link diagram as shown in Fig. \ref{vi}.
These are called the virtual link diagrams $\textit{associated with}$ a signed cyclic graph.}
They are all equivalent (i.e. representing the same virtual link) up to generalised Reidemeister moves (See Fig. \ref{RM}).
If a virtual link occurs in this way then we say that the virtual link is $\textit{graphical}$.
Conversely, for a checkerboard colorable link diagram, we adopt a natural method, which is first introduced by Chmutov and Pak \cite{Chmu07}, to construct an associated $\textit{signed ribbon graph}$ in Section \ref{GCB}.
We do not go into detail about ribbon graphs, but refer the interested reader to \cite{Mo,Chmu09}.

Kauffman \cite{Kau2} posed the following problem: does there exist a graphical non-trivial virtual knot with unit Jones polynomial?
In this article we answer the question, and this further motivates us to characterize the family of graphical virtual links.
In section 3, we show how to translate the virtualization operation on link diagrams to an operation on signed cyclic graphs.
This gives us access to infinitely many signed cyclic graphs whose associated virtual link diagrams have trivial Jones polynomials.
{A checkerboard colorable virtual link $L$ has a checkerboard colorable diagram $D$ (see Section 2.3).
A signed Tait (cyclic) graph $G$ can be constructed from the diagram $D$. In turn, via medial construction, we can obtain a diagram $D'$ associated with $G$, which is equivalent to the diagram $D$.
We shall prove that a virtual link $L$ is graphical if and only if it is checkerboard colorable in Section \ref{GCB}.}

Another motivation for this paper is to extend an analog of the relationship between signed plane graphs and classical link diagrams to signed cyclic graphs and virtual link diagrams (see Section \ref{poly}).
On the one hand, the bracket polynomial of a classical link diagram equals the signed Tutte polynomial of its signed plane graph \cite{Kau4}.
On the other hand, for a checkerboard colorable virtual link diagram $D$, Chmutov and Pak \cite{Chmu07} construct a corresponding signed ribbon graph $\widehat{G}$ from it and discuss the relationship between the bracket polynomial of $D$ and the signed Bollob\'{a}s-Riordan polynomial of $\widehat{G}$.
Here we construct a polynomial $F[G]$ for a signed cyclic graph via a deletion-marking recurrence and establish the relationship between $F[G]$ and bracket polynomial of $D_G$ which is one of the virtual link diagrams associated with $G$ by medial construction in Section \ref{poly}.
Finally we give a spanning subgraph expansion for $F[G]$.
In fact, $F[G]$ is a specialization of the signed Bollob\'{a}s-Riordan polynomial.

\begin{figure}[!htbp]
  \centering
  \includegraphics[width=4.5in]{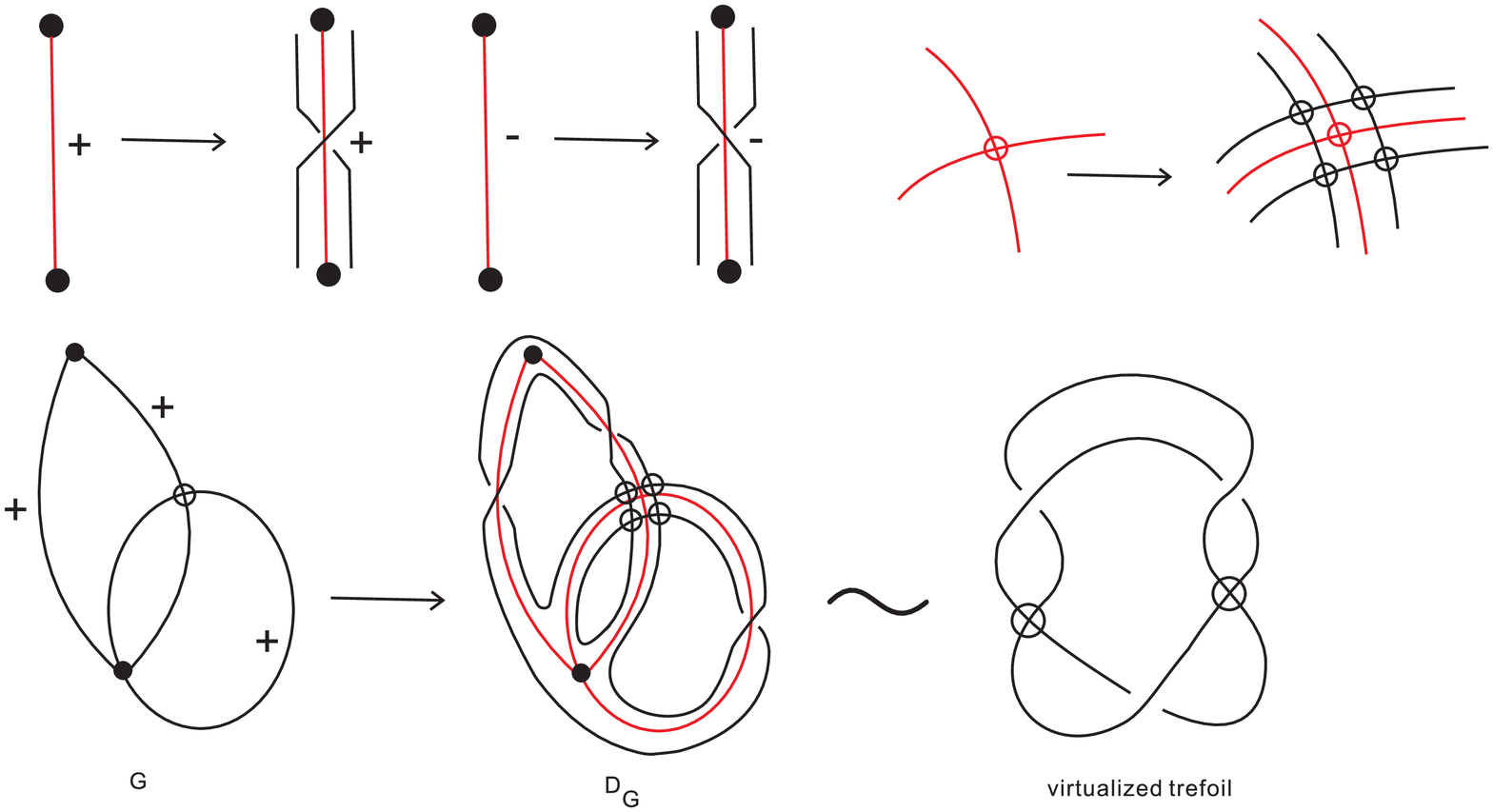}
  \renewcommand{\figurename}{Fig.}
\caption{{\footnotesize Associate a signed cyclic graph with a unique virtual link.}}\label{vi}
\end{figure}

\section{Preliminaries}
\subsection{Ribbon graphs}\label{ribbon}
A $\textit{ribbon graph}$ $\textbf{G}=(V(\textbf{G}),E(\textbf{G}))$ is a surface with boundary, represented as the union of two sets of discs: a set $V(\textbf{G})$ of vertices and a set $E(\textbf{G})$ of edges such that:

(1) the vertices and edges intersect in disjoint line segments;

(2) each such line segment lies on the boundary of precisely one vertex and precisely one edge; and

(3) every edge contains exactly two such line segments.

See Fig. \ref{ir} for an example of ribbon graphs.

It is well-known that ribbon graphs are equivalent to cellularly embedded graphs.
Ribbon graphs arise naturally from small neighborhoods of cellularly embedded graphs.
On the other hand, topologically, a ribbon graph is a surface with boundary.
Capping-off the holes results in a band decomposition, which gives rise to a cellularly embedded graph in the obvious way.
A ribbon graph is $\textit{orientable}$ if it is orientable when viewed as a punctured surface.
Two ribbon graphs are $\textit{equivalent}$ if there is a homeomorphism taking one to the other that preserves the vertex-edge structure.
The homeomorphism should be orientation preserving when the ribbon graphs are orientable.
The homeomorphism should be sign-preserving when signed ribbon graphs are discussed.
We consider ribbon graphs up to this equivalence.

The concept of a (signed) cyclic graph is $\textit{equivalent}$ to the concept of an orientable (signed) ribbon graph.
Two signed cyclic graphs are $\textit{equivalent}$ if and only if their corresponding orientable ribbon graphs are equivalent.
In this paper, we will use the two equivalent representations interchangeably, using whichever best facilitates the discussion at hand.
When the two equivalent representations appear at the same time, sometimes we let $\textbf{G}_r$ denote the signed ribbon graph associated with the signed cyclic graph $G$.
We refer interested readers to \cite{Boll,Boll2,Chmu17} and its references.

\begin{figure}[!htbp]
  \centering
  \includegraphics[width=2.5in]{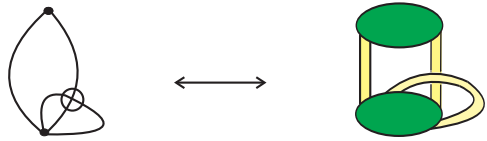}
  \renewcommand{\figurename}{Fig.}
\caption{{\footnotesize The cyclic graph $G$ and its associated orientable ribbon graph $\textbf{G}_r$.}}\label{ir}
\end{figure}

\subsection{Virtual link diagrams}
A $\textit{virtual link diagram}$ is a closed 1-manifold generically immersed in $\mathbb{R}^2$ such that each double point is labeled to be (1) a real crossing which is indicated as usual in classical knot theory or (2) a virtual crossing which is indicated by a small circle around the double point.
The moves for virtual link diagrams illustrated in Fig. \ref{RM} are called $\textit{generalized Reidemeister moves}$. The $\textit{detour move}$ (see Fig. \ref{dm}) is a consequence of $(B)$ and $(C)$ in Fig. \ref{RM}.
Two virtual link diagrams are said to be $\textit{equivalent}$ if they are related by a finite sequence of generalized Reidemeister moves.
We call the equivalence class of a virtual link diagram a $\textit{virtual link}$.
We refer the reader to \cite{Kau1,Kau3} for further details about virtual links.
\begin{figure}[pb]
\centering
\includegraphics[width=3.5in]{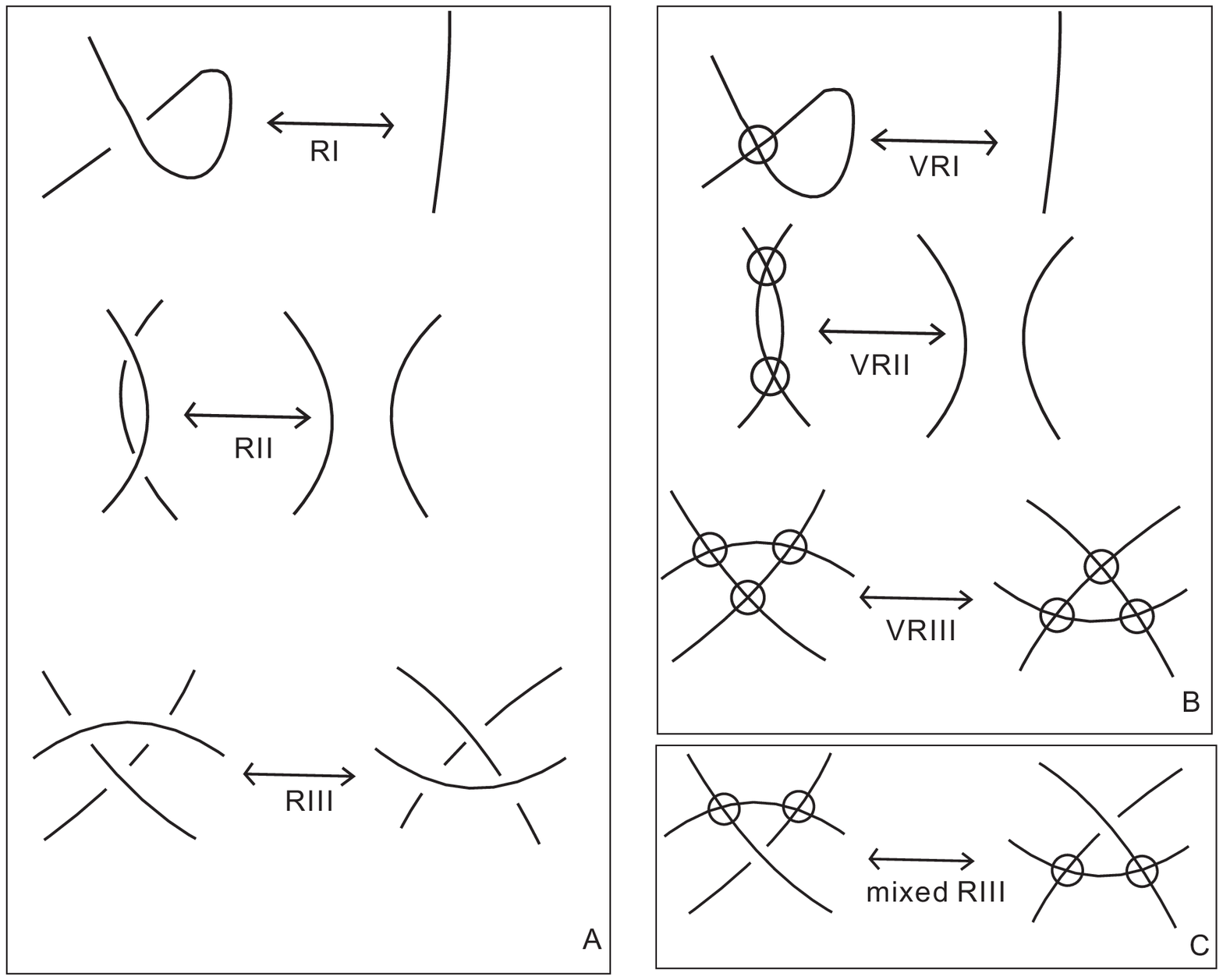}
\renewcommand{\figurename}{Fig.}
\caption{{\footnotesize Generalized Reidemeister Moves.}}\label{RM}
\end{figure}
\begin{figure}[pb]
\centering
\includegraphics[width=3.5in]{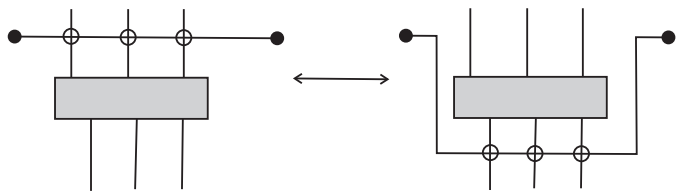}
\renewcommand{\figurename}{Fig.}
\caption{{\footnotesize Detour Move.}}\label{dm}
\end{figure}


\subsection{Checkerboard colorable virtual links}
The notion of a checkerboard coloring of a virtual link diagram was introduced by Kamada in \cite{K1,K2}.
A virtual link diagram is said to be $\textit{checkerboard colorable}$ if there is a coloring of a small neighbourhood of one side of each arc in the diagram such that near a classical crossing the coloring alternates, and near a virtual crossing the colorings go through independent of the crossing strand and its coloring.
A virtual link is said to be $\textit{checkerboard colorable}$ if it has a checkerboard colorable diagram.
Two examples are given in Fig. \ref{ckb}.

\begin{figure}[!htbp]
  \centering
  \includegraphics[width=3.5in]{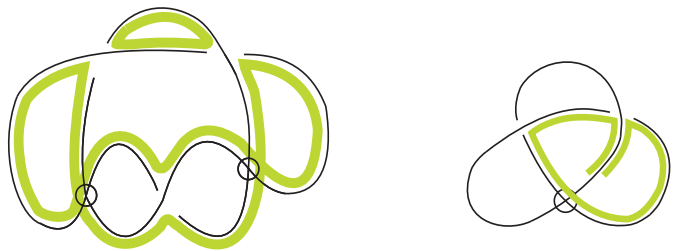}
  \renewcommand{\figurename}{Fig.}
\caption{{\footnotesize The left virtual link diagram is checkerboard colorable, the right is not checkerboard colorable.}}\label{ckb}
\end{figure}

Note that every checkerboard colorable virtual link diagram has two kinds of checkerboard colorings and not every virtual link diagram is checkerboard colorable.
Checkerboard colorability of a virtual link diagram is not necessarily preserved by generalized Reidemeister moves.
There are five {ways} to detect checkerboard colorability {discussed} in \cite{Ima}. But there are still some virtual knots whose checkerboard colorability has not been detected.
{For example, it is, at this writing, unknown whether the Kishino virtual knot (4.55 in \cite{Gr}) \cite{Ima,Kau3} is checkerboard colorable. }

\section{Graphical virtual links}\label{GCB}
\subsection{Examples}
Given a crossing $i$ in a link diagram, we define the $\textit{virtualization}$ $v(i)$  of the crossing by the local replacement indicated in Fig. \ref{vi2}, i.e., the original crossing is replaced by a crossing that is flanked by two virtual crossings.
It is known that the virtualized trefoil (see Fig. \ref{vi}) is non-trivial and non-classical virtual diagram with unit Jones polynomial \cite{Kau1}.
We found that the virtualized trefoil is graphical as shown in Fig. \ref{vi}, giving a positive answer to {Kauffman's problem}.
On the other hand, we shall explain its graphicality since changing an edge (i.e. take the partial dual of an edge in \cite{Mo}) of a signed cyclic graph $G$ corresponds to virtualizing a real crossing of link diagram $D_G$ associated with $G$ as illustrated in Fig. \ref{fig12}.
{The construction can be generalized to any classical knot diagram by selecting a subset of crossings whose switchings unknot the diagram.
By virtualizing these crossings while leaving the rest of the diagram just as before, we can obtain a graphical virtual knot diagram with unit Jones polynomial which is non-trivial and non-classical \cite{DKK}.
We refer the reader to examine \cite{DKK,Kau1,Kau3,FIKM} for the details of this construction.
}

\begin{figure}[!htbp]
\centering
\includegraphics[width=3.5in]{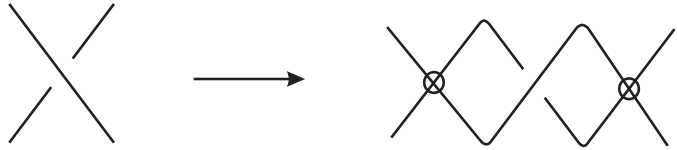}
\renewcommand{\figurename}{Fig.}
\caption{{\footnotesize Virtualize a real crossing.}}\label{vi2}
\end{figure}
\begin{figure}[!htbp]
\centering
\includegraphics[width=3.5in]{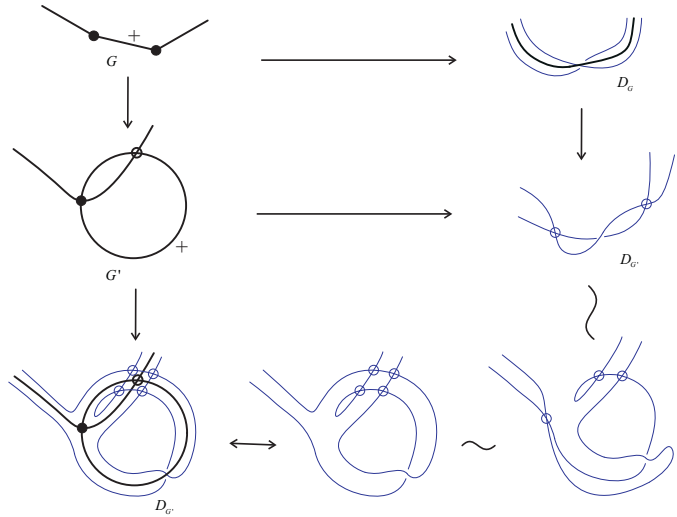}
\renewcommand{\figurename}{Fig.}
\caption{{\footnotesize Change an edge of $G$ and virtualize a real crossing of $D_G$.}}\label{fig12}
\end{figure}

\subsection{A necessary and sufficient condition of graphicality}\label{cond}
\begin{lemma}\label{main1} Let $G$ be a signed cyclic graph and {let $D_G$ be a
virtual link diagram} associated with $G$. Then $D_G$ is checkerboard colorable.
\end{lemma}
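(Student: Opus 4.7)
My plan is to exploit the fact that a signed cyclic graph $G$ is equivalent to an orientable signed ribbon graph $\mathbf{G}_r$, hence corresponds to a cellular embedding of $G$ into some closed orientable surface $\Sigma$. The medial construction can therefore be carried out first on the surface $\Sigma$ before immersing into the plane, and on $\Sigma$ the checkerboard coloring will come essentially for free.

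First I would describe the medial graph $M(G)\subset \Sigma$ obtained from the cellular embedding of $G$: it is a $4$-valent graph whose vertices sit on the midpoints of the edges of $G$, and whose faces are in bijection with $V(G)\sqcup F(G)$, the vertices and the faces of the embedding. Color a face of $M(G)$ black if it contains a vertex of $G$ and white if it corresponds to a face of the embedding. Because every edge of $M(G)$ separates a $V$-face from an $F$-face, this is a proper $2$-coloring of the faces of $M(G)$. Resolving each $4$-valent vertex of $M(G)$ into a crossing according to the sign of the corresponding edge of $G$ yields a link diagram $D$ on $\Sigma$ in which the black/white coloring is a genuine checkerboard coloring: the colors alternate around every crossing.

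Next I would transfer the picture from $\Sigma$ to the plane by the chosen generic immersion that defines $D_G$. Under such an immersion, the only new double points of the underlying $4$-valent graph are the virtual crossings, which are artifacts of two disjoint strand-neighborhoods in $\Sigma$ being forced to overlap in $\mathbb{R}^2$. Along each strand the one-sided coloring in a tubular neighborhood is carried through unchanged; at a classical crossing the alternation is inherited from the coloring on $\Sigma$; at a virtual crossing the two strands each carry their own side-coloring and pass through independently of one another, which is precisely the local condition required of a checkerboard coloring of a virtual diagram.

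The main obstacle, such as it is, is simply to make the local picture at a virtual crossing rigorous: I would verify that the coloring is well defined on the complement of the virtual crossings by noting that the checkerboard regions on $\Sigma$ pull back to (not necessarily simply connected) open regions in $\mathbb{R}^2$ whose closures meet the diagram only along strands, and that the detour move interpretation of virtual crossings means the colors need not match across a virtual crossing, exactly as in Kamada's definition. Packaging these observations gives the checkerboard coloring of $D_G$ and proves the lemma.
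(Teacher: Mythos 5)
Your proof is correct, but it is packaged differently from the paper's. The paper stays entirely in the plane: it chooses the Kauffman state of $D_G$ obtained by $B$-smoothing every positive crossing and $A$-smoothing every negative one, observes that this state consists of $|V(G)|$ disjoint closed curves (one per vertex of $G$), and then shades a thin annulus along each curve; near each real crossing the annulus jumps strands in the way that forces the alternation, and near virtual crossings nothing special happens, so the shading is a checkerboard coloring. You instead go up to the closed orientable surface $\Sigma$ determined by the rotation system, use the standard bipartition of the faces of the medial graph into $V$-faces and $F$-faces to get a genuine face $2$-coloring on $\Sigma$, and then push the resulting one-sided neighborhood coloring through the generic immersion, checking the local condition at virtual crossings. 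The two arguments are really the same fact seen from two sides: the $|V(G)|$ closed curves of the paper's chosen state are exactly the boundaries of your $V$-faces, and the paper's annuli are exactly the one-sided neighborhoods you transport. Your version makes the global origin of the coloring transparent (and makes it obvious that the other face class gives the second checkerboard coloring, which the paper relegates to a remark via the opposite smoothing and $bc(G)$ curves); the paper's version avoids any explicit mention of the surface and reads directly off the diagram in the plane, which is slightly more economical given that the rest of the paper works with planar diagrams and bracket states.
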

\begin{proof}
For each real crossing of $D_G$, at the positive (resp. negative) edge of $G$, we shall take $B-$smoothing (resp. $A-$smoothing), then $D_G$ becomes a set of $|V(G)|$ disjoint closed curves.
{We shade the closed curves using a collection of shaded annuli
as shown in Fig. \ref{ab} (1) and (2).
Then the shading will induce a checkerboard coloring of $D_G$, completing the proof.}
\end{proof}

\begin{figure}[!htbp]
  \centering
  \includegraphics[width=4.5in]{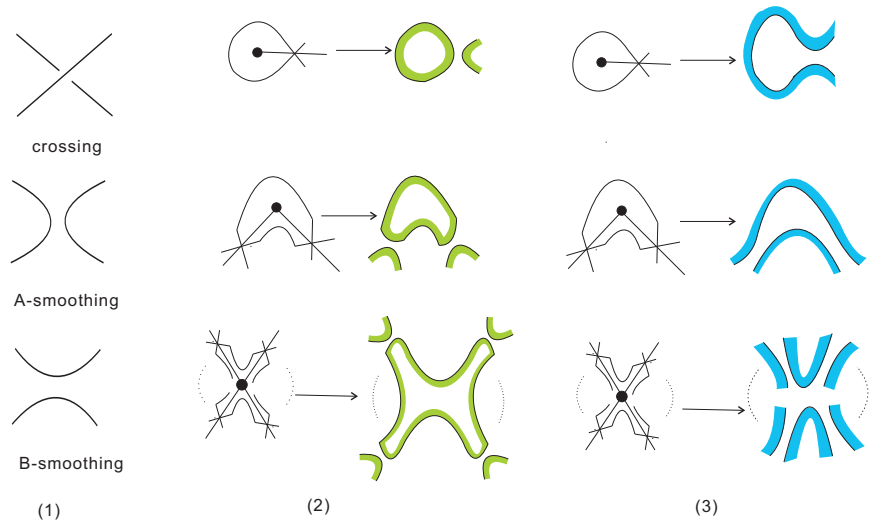}
  \renewcommand{\figurename}{Fig.}
\caption{{\footnotesize Checkerboard colorings of virtual link diagrams associated with cyclic graphs.}}\label{ab}
\end{figure}

\begin{remark}
In the above proof, using similar method, we take the $A-$smoothing (resp. $B-$smoothing) for each real crossing of $D_G$ at the positive (resp. negative) edge of $G$.
Then $D_G$ becomes a set of $|bc(G)|$ disjoint closed curves, which correspond to the boundary components of $G$.
{We shade the closed curves using a collection of shaded annuli as shown in Fig. \ref{ab} (1) and (3).}
Then the shading will induce another checkerboard coloring of $D_G$.
\end{remark}

Let $D$ be a checkerboard colorable virtual link diagram and $C$ be a checkerboard coloring of $D$.
Topologically, a coloring is represented by a connection of annuli.
Each annulus has two boundary circles, an exterior circle which goes along the link except small arcs near real crossings where it jumps from one strand to another one, and an interior circle.
On the one hand, in order to construct a ribbon graph from a checkerboard colorable virtual link diagram we replace every crossing by a signed edge-ribbon connecting the corresponding arcs of the exterior circles as shown in Fig. \ref{vb}.
The signed ribbon graph $\textbf{G}_D$ is obtained by gluing discs along the interior circles of the annuli of the coloring.
The approach is first introduced by Chmutov and Pak in \cite{Chmu07}.
An example is given in Fig. \ref{GH}.

\begin{figure}[!htbp]
  \centering
  \includegraphics[width=4.5in]{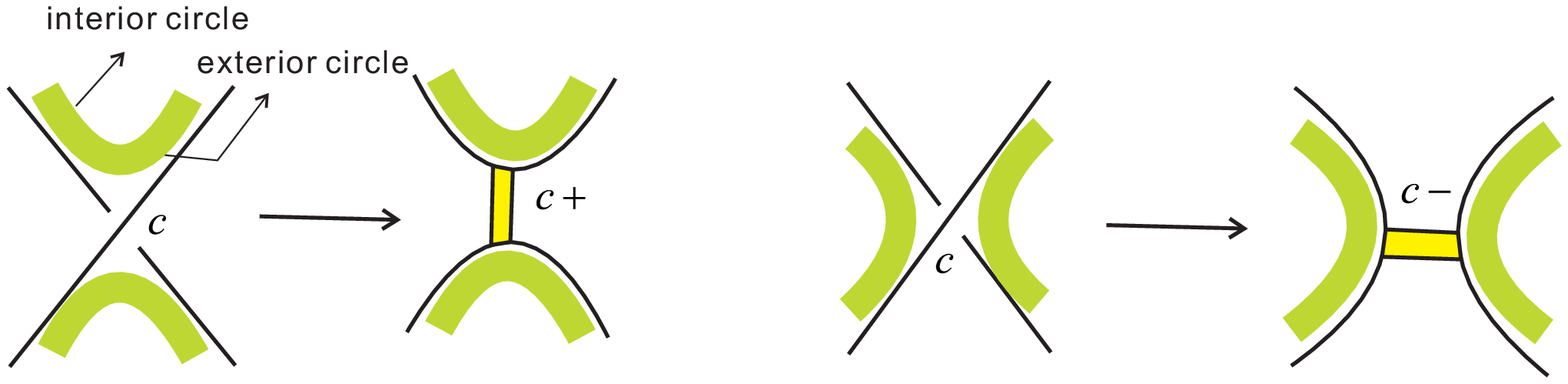}
  \renewcommand{\figurename}{Fig.}
\caption{{\footnotesize The real crossings of $D$ corresponds to the edge-ribbons of ribbon graph.}}\label{vb}
\end{figure}

\begin{figure}[!htbp]
  \centering
  \includegraphics[width=4.0in]{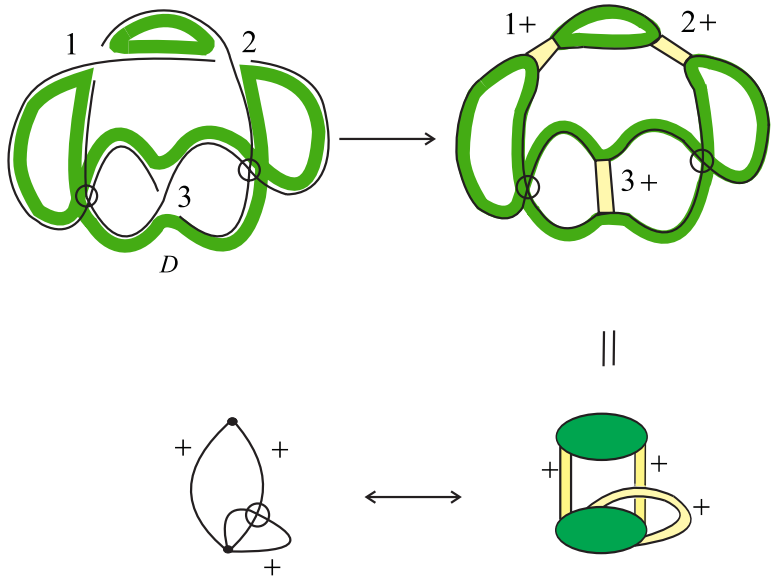}
  \renewcommand{\figurename}{Fig.}
\caption{{\footnotesize An example of constructing $\textbf{G}_D$ from checkerboard coloring of $D$.}}\label{GH}
\end{figure}

Let $U$ be the corresponding 4-valent cyclic graph of $D$ after regarding each real crossing of $D$ as a 4-valent vertex.
{Let $\textbf{U}_r$ be the signed ribbon graph associated with the signed cyclic graph $U$.}
Note that classical crossings correspond to disks with attached bands and virtual crossings becomes non-touching bands as shown in Fig. \ref{rv}.
The ribbon graph $\textbf{U}_r$ obtained has boundary curves.
Each boundary curve is capped with a disk.

Note that for another checkerboard coloring of $D$, we can similarly obtain a signed ribbon graph $\textbf{G}^*_D$, which is the geometric dual of $\textbf{G}_D$.
{Then $D$, $\textbf{G}_D$ and $\textbf{G}^*_D$ are both embedded into the closed surface formed by capping off the holes of $\textbf{U}_r$.}
See Fig. \ref{rv}.
\begin{figure}[!htbp]
  \centering
  \includegraphics[width=3.5in]{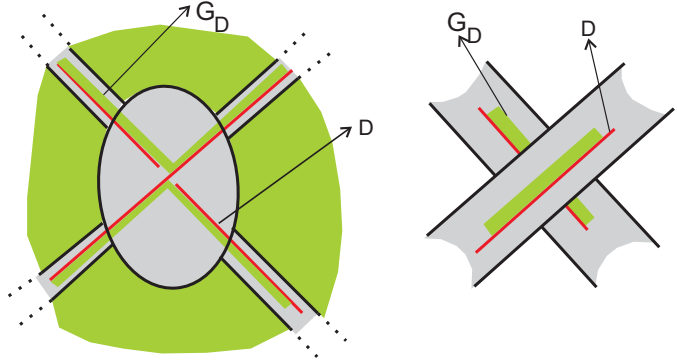}
  \renewcommand{\figurename}{Fig.}
\caption{{\footnotesize {Real crossings of $D$ are embedded in $\textbf{U}_r$. Virtual crossings of $D$ become bands in $\textbf{U}_r$.
$\textbf{G}_D$ is also embedded into the closed surface obtained from $\textbf{U}_r$ by capping off holes with disks.}}}\label{rv}
\end{figure}

\begin{lemma}\label{lm2}
Let $D$ be a checkerboard colorable virtual link diagram,
$\textbf{G}_D$ and $\textbf{G}^*_D$ be the corresponding signed ribbon graphs $D$. Then they are orientable.
\end{lemma}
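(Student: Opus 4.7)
The plan is to exhibit both $\textbf{G}_D$ and $\textbf{G}_D^*$ as sub-ribbon-graphs of a single closed orientable surface, so that orientability follows immediately by restriction of an orientation.

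First, I would invoke the equivalence between (signed) cyclic graphs and orientable (signed) ribbon graphs stated in Section~\ref{ribbon}. The graph $U$ is a cyclic graph, since its $4$-valent vertices (the real crossings of $D$) inherit the counterclockwise cyclic ordering of the four incident half-edges from the planar immersion of $D$. Consequently the associated ribbon graph $\textbf{U}_r$ is orientable, and capping each boundary component of $\textbf{U}_r$ with a disk produces a closed orientable surface $\Sigma$.

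Next I would verify that $\textbf{G}_D$ sits inside $\Sigma$ in the manner depicted in Fig.~\ref{rv}. The annuli of the checkerboard coloring of $D$ lie naturally in $\Sigma$, with their exterior circles running along the strands of $D\subset\Sigma$; the signed edge-ribbons glued at each real crossing lie inside the corresponding vertex-disk of $\textbf{U}_r$, hence inside $\Sigma$. The remaining point is that each interior boundary circle of an annulus bounds a disk in $\Sigma$ disjoint from the edge-ribbons: these disks are precisely the unshaded regions of $\Sigma\setminus D$, which (after capping the holes of $\textbf{U}_r$) are genuine disks. Capping the annuli with these disks turns each annulus into a vertex disk of $\textbf{G}_D$ and realises $\textbf{G}_D$ as a sub-ribbon-graph of the orientable surface $\Sigma$; restricting an orientation of $\Sigma$ then orients $\textbf{G}_D$. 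Applying the identical argument to the opposite checkerboard coloring yields orientability of $\textbf{G}_D^*$.

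The step I expect to be the main obstacle is making the embedding $\textbf{G}_D\hookrightarrow\Sigma$ precise rather than merely schematic, in particular identifying each interior circle of a coloring annulus with a curve that bounds a capping disk of $\textbf{U}_r$ (possibly together with adjacent unshaded regions) and checking that no such disk collides with an edge-ribbon. Using Fig.~\ref{rv} as a local model at both real and virtual crossings, this should reduce to a routine case analysis; once the embedding is established, orientability of $\textbf{G}_D$ and $\textbf{G}_D^*$ is an immediate consequence of the orientability of $\Sigma$.
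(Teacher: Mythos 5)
Your proposal is correct and follows essentially the same route as the paper, whose entire proof is the one-line observation that $\textbf{U}_r$ is orientable (with the embedding of $\textbf{G}_D$ and $\textbf{G}^*_D$ into the capped-off surface having been set up in the preceding discussion and Fig.~\ref{rv}). You have simply filled in the details of that embedding that the paper leaves implicit.
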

\begin{proof}
This is because $\textbf{U}_r$ is orientable.
\end{proof}

\begin{theorem}\label{main2}
Let $L$ be a  virtual link, then $L$ is graphical if and only if it is checkerboard colorable.
\end{theorem}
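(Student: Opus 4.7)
The theorem splits into two implications, and the forward one is essentially already done. If $L$ is graphical, pick a signed cyclic graph $G$ with an associated virtual diagram $D_G\simeq L$; Lemma~\ref{main1} produces a checkerboard coloring of $D_G$, so $L$ is checkerboard colorable by definition. The substantive content is the converse.

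For the converse, the plan is to take any checkerboard colorable diagram $D$ of $L$ and exhibit an explicit signed cyclic graph $G$ whose associated medial diagram $D_G$ is equivalent to $D$. The construction is exactly the Chmutov--Pak procedure described just before the theorem: starting from a checkerboard coloring of $D$, form the signed ribbon graph $\mathbf{G}_D$ by taking the shaded annuli, capping their interior circles with discs to become the vertices, and attaching a signed edge-band at each real crossing with sign recorded according to whether the shaded region sits across the over- or under-strand. By Lemma~\ref{lm2}, $\mathbf{G}_D$ is orientable, and by the equivalence between orientable signed ribbon graphs and signed cyclic graphs recalled in Section~\ref{ribbon}, $\mathbf{G}_D$ corresponds to a signed cyclic graph $G$.

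The remaining task is to check that the medial construction applied to $G$ returns $D$ up to generalised Reidemeister moves. I would do this on the closed orientable surface $\Sigma$ obtained from the 4-valent ribbon graph $\mathbf{U}_r$ by capping off its boundary circles; as noted in Section~\ref{cond}, both $D$ (viewed with its real crossings at the 4-valent vertices of $\mathbf{U}_r$) and the ribbon graph $\mathbf{G}_D$ embed cellularly in $\Sigma$. By construction each edge-band of $\mathbf{G}_D$ is threaded through a crossing of $D$, so the 4-valent cyclic graph $U$ underlying $D$ is precisely the medial of $\mathbf{G}_D$ on $\Sigma$, with matching cyclic orders at each vertex. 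Immersing $\Sigma$ into $\mathbb{R}^2$ using the original immersion producing $D$ from $\mathbf{U}_r$ then yields $D_G = D$ as a virtual link diagram, and any other immersion choice leads to a diagram equivalent to it by detour moves.

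The main obstacle is the last sign-matching step: one must verify that the sign rule used in forming $\mathbf{G}_D$ is exactly the rule needed at each medial vertex of $G$ so that the signed medial construction of Fig.~\ref{vi} reproduces the original over/under information at every crossing of $D$. This is a purely local case analysis that breaks into two sub-cases according to which of the two checkerboard colorings was chosen, and is forced once the convention in Fig.~\ref{ab} is aligned with the medial convention in Fig.~\ref{vi}. Everything else is the observation that the two constructions $D\mapsto \mathbf{G}_D$ and $G\mapsto D_G$ are geometric inverses on the common surface $\Sigma$.
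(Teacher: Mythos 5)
Your proposal is correct and follows essentially the same route as the paper: necessity from Lemma~\ref{main1}, and for sufficiency the Chmutov--Pak graph $\mathbf{G}_D$ (orientable by Lemma~\ref{lm2}) viewed together with $D$ on the surface coming from $\mathbf{U}_r$, where $D$ is isotoped to the medial diagram of $G$ and the pair is then immersed into the plane. The only slip is that you speak of immersing the closed surface $\Sigma$ into $\mathbb{R}^2$, which is impossible for a closed surface; the paper instead immerses the abstract link diagram $P=(\mathbf{G}_D,D')$, a surface with boundary, which is what your reference to the original immersion of $\mathbf{U}_r$ in effect amounts to.
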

\begin{proof}
The necessity of Theorem \ref{main2} follows from Lemma \ref{main1}.
Now we prove the sufficiency.
Let $D$ be a checkerboard colorable link diagram of $L$, and $\textbf{G}_D$ be one of the corresponding signed ribbon graphs of $D$.
Let $G$ be the core graph of $\textbf{G}_D$.
It is sufficient to explain that we can obtain virtual link diagram $D$ from $G$ via medial construction.
Let $U$  be the underlying 4-valent cyclic graph of $D$ and
{let $F=\textbf{U}_r$ be the signed ribbon graph associated with the signed cyclic graph $U$.}
Then $D$ can be viewed as a link diagram on $F$ and a link in $F\times I$, and $G$ can be embedded in $F$.
Then $D$ can be transformed into the diagram $D'$ constructed from $G$ via the medial construction on $F$ by isotopies of $F$ as shown in Fig. \ref{all}.
Note that $P=(\textbf{G}_D,D')$ is an abstract link diagram \cite{Kam} which can be immersed into the plane and accordingly $G$ is also immersed into the plane. Then $L$ is graphical from $G$.

\begin{figure}[!htbp]
  \centering
  \includegraphics[width=5in]{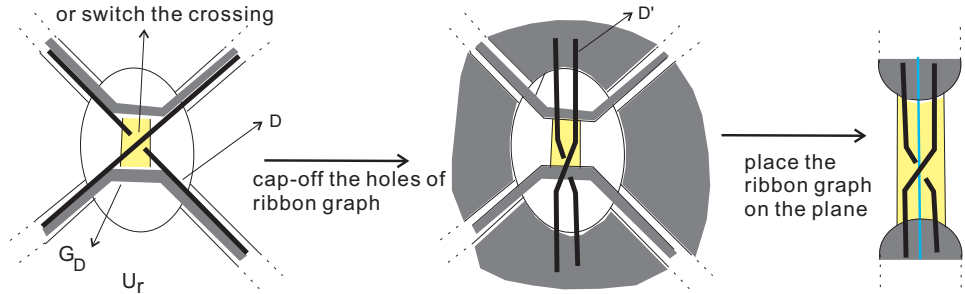}
  \renewcommand{\figurename}{Fig.}
\caption{{\footnotesize (1) In the left figure, {the bold line} represents $D$ which is embedded into ribbon graph $\textbf{U}_r$, {the shaded annulus} represents a checkerboard coloring of $D$, from where $\textbf{G}_D$ are constructed. (2) $\textbf{G}_D$ and deformation of $D$ to $D'$. (3) In the right figure, we immerse $P$ into the plane.}}\label{all}
\end{figure}
\end{proof}

\begin{remark}
We call $\textbf{G}_D$ and $\textbf{G}^*_D$ (or $G_D$ and $G^*_D$) {the $\textit{signed Tait graphs}$} of a checkerboard colorable virtual link diagram $D$.
Moveover, in \cite{Mo}, a set of ribbon graphs are constructed from $D$, which are partial dual to each other and includes the two signed Tait graphs above.
\end{remark}

\begin{corollary}\label{mcor}
Let $L$ be a checkerboard colorable virtual link, $D$ be a checkerboard colorable diagram of $L$ and $G_D$ be one of the signed Tait graphs of $D$. Then $L$ is graphical from $G_D$.
\end{corollary}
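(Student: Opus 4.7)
The plan is to read this corollary off directly from the sufficiency argument already given in the proof of Theorem \ref{main2}. In that argument we start from a checkerboard colorable diagram $D$ of $L$, build the signed ribbon graph $\textbf{G}_D$ from a chosen checkerboard coloring, pass to its \emph{core} cyclic graph, and verify that the medial construction applied to this core graph reproduces $D$ up to isotopy on the ambient surface $F = \textbf{U}_r$. By the definition recorded in the remark just above the corollary, the core cyclic graph of $\textbf{G}_D$ is precisely the signed Tait graph $G_D$. So the virtual link produced from $G_D$ by the medial procedure of Section \ref{GCB} is $L$ itself.

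Concretely, the steps I would spell out are: (i) identify the core of $\textbf{G}_D$ with the signed Tait graph $G_D$, so that the data used in the proof of Theorem \ref{main2} is literally $G_D$; (ii) invoke the chain of inclusions from that proof, namely that $D$ sits as a link diagram on $F$, that $G_D$ is embedded in $F$, and that the medial construction applied to $G_D$ on $F$ yields a diagram $D'$ isotopic to $D$ on $F$; (iii) form the abstract link diagram $P = (\textbf{G}_D, D')$ and immerse it into $\mathbb{R}^2$, thereby immersing $G_D$ and obtaining a virtual link diagram associated with $G_D$ whose equivalence class is exactly $L$. This is the definition of $L$ being graphical from $G_D$.

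There is no substantive obstacle beyond making the naming explicit: Theorem \ref{main2} already proves \emph{some} signed cyclic graph witnesses graphicality, and all we are doing is observing that the construction in its proof produces $G_D$ on the nose (and, by the remark, this applies to either of the two signed Tait graphs, since the dual $\textbf{G}_D^*$ corresponds to the other checkerboard coloring of $D$ and runs through the same argument). Thus the corollary requires only quoting the proof of Theorem \ref{main2}, not a new construction.
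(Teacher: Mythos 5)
Your proposal is correct and follows exactly the route the paper intends: the corollary is just the sufficiency argument of Theorem \ref{main2} with the core graph of $\textbf{G}_D$ explicitly identified as the signed Tait graph $G_D$ (per the preceding remark), and the paper offers no further argument beyond this. Your additional observation that the dual coloring handles $G_D^*$ by the same reasoning is consistent with the paper's phrasing ``one of the signed Tait graphs.''
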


\section{A polynomial for signed cyclic graphs}\label{poly}
For a virtual link diagram $D$, the $\textit{bracket polynomial}$ $\langle D\rangle=\langle D\rangle(A,B,d)$ \cite{Kau1,Kau3} can be defined recursively by using the following three relations:
\begin{equation*}
  {\centering \includegraphics[width=2.0in]{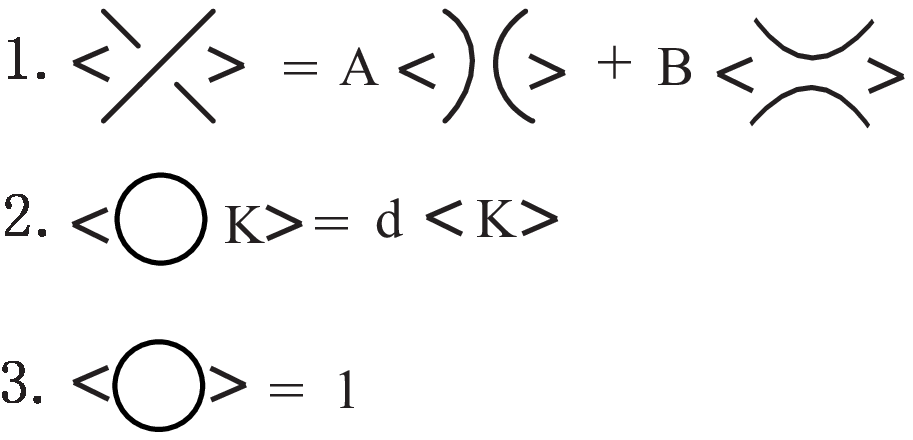}}
\end{equation*}
\noindent Here it is understood that the three small diagrams are parts of otherwise identical larger diagrams and $\bigcirc$ denotes a diagram with no real crossings.

There also is a state-space expansion for bracket polynomial as follows.

\begin{definition}
The bracket polynomial of a virtual link diagram $D$ is a polynomial in three variables $A,B,d$ defined by the formula

$$\langle D\rangle(A,B,d)=\sum_{\sigma\in \mathscr{S}(D)}A^{\alpha(\sigma)}B^{\beta(\sigma)}d^{|\sigma|-1},$$

\noindent where $\alpha(\sigma)$ and $\beta(\sigma)$ are the numbers of $A$-smoothings and $B$-smoothings (see Fig. \ref{ab}) in a state $\sigma$, respectively, $\mathscr{S}(D)$ denotes the set of states of $D$, $|\sigma|$ is the number of closed curves in $\sigma$.
\end{definition}

In particular, it's well-known that if $B=A^{-1}$, $d=-(A^2+A^{-2})$, then $\langle D\rangle$ is invariant under the Reidemeister moves of type II and type III and generalized Reidemeister moves.
The bracket polynomial is normalized to an invariant $f_K(A)$ of all the moves by the formula $f_K(A)=(-A^3)^{-w(D)}\langle D\rangle$ where $w(D)$ is the writhe of the oriented diagram $D$.
The writhe is the sum of the orientation signs ($\pm1$) of the crossings of the diagram.
Here the sign is the $\textit{oriented sign}$ shown in Fig. \ref{sign}.
The $\textit{Jones polynomial}$ of an oriented virtual link $K$, $V_K(t)$ is given in terms of this model by the formula:

$$V_K(t)=f_K(t^{-\frac{1}{4}}).$$

We refer interested readers to \cite{Kau1,Kau3}.

\begin{figure}[!htbp]
  \centering
  \includegraphics[width=2.0in]{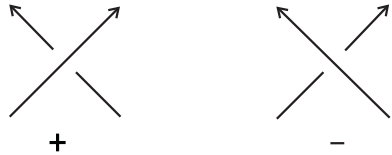}
  \renewcommand{\figurename}{Fig.}
  \caption{{\footnotesize The oriented signs of a link diagram.}}\label{sign}
\end{figure}

For a real crossing \includegraphics[width=1.5cm,keepaspectratio]{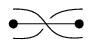}, there are two ways to resolve a real crossing, that is, \includegraphics[width=1.5cm,keepaspectratio]{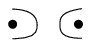} and \includegraphics[width=1.5cm,keepaspectratio]{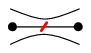} (the edge is marked with a red skew line as Definition \ref{def2}). Hence we can define a polynomial for a signed cyclic graph, which has closely relationship with bracket polynomial for a virtual link diagram.

A $\textit{spanning subgraph}$ $H$ of a cyclic graph $G$ is a subgraph consisting of all vertices of $G$ and a subset of the edges of $G$ which respects the cyclic order of $G$.
When every edge of $H$ is a marked edge, we call $H$ a $\textit{marked spanning subgraph}$ of $G$. Specially, $H$ can be a empty graph which only has vertices but no edges. Let $bc(H)$ be the number of boundary components for ribbon graph $\textbf{H}_r$.

\begin{definition}\label{def2}
Let $G$ be a signed cyclic graph, then the defining formulas for $F[G]$ are:

(1)For any $e\in E(G)$, let $G(\overline{e})$ be the graph obtained from $G$ by keeping $e$ and marking $e$ with a red skew line as a special edge, then

$F[G]=B\cdot F[G-e]+A\cdot F[G(\overline{e})]$ if sign($e$)$>$0,

$F[G]=A\cdot F[G-e]+B\cdot F[G(\overline{e})]$ if sign($e$)$<$0.

(2) If $H$ is a marked spanning subgraph of $G$, then $F[H]=d^{bc(H)-1}$.

\end{definition}
\ \ \ \ \ \ \ \ \ \ \ \ \ \ \ \ \ \ \includegraphics[width=3.5in,keepaspectratio]{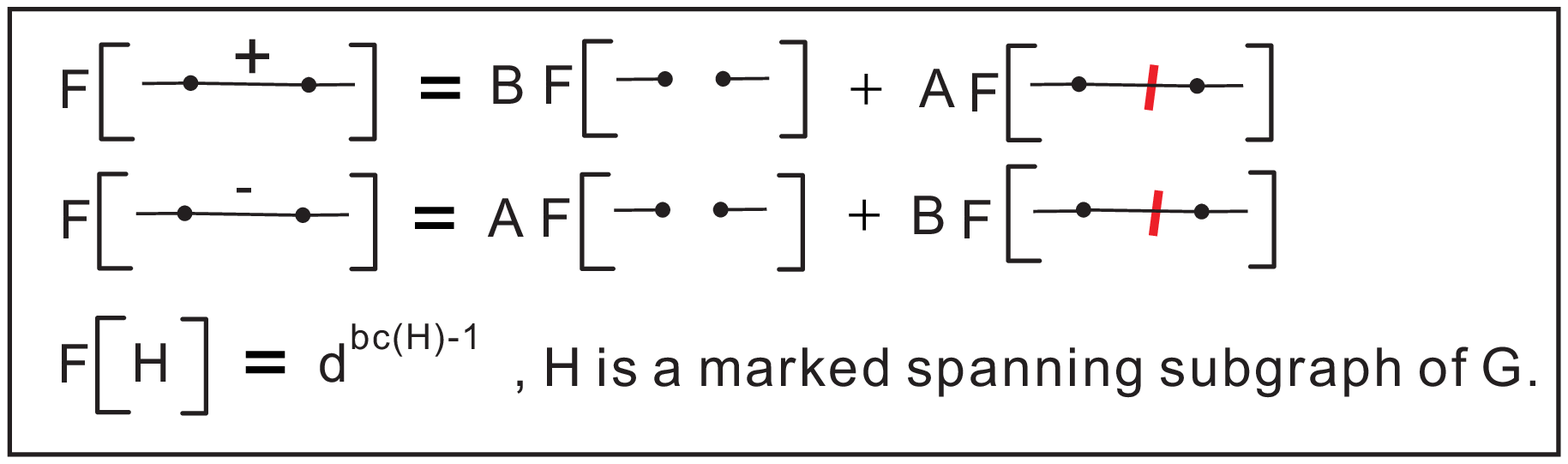}

$\textbf{Example 4.1:}$
Calculate $F[G]$ for a signed cyclic graph \includegraphics[width=1.0cm,keepaspectratio]{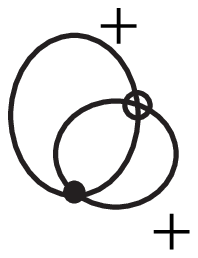}.

\ \ \ \ \ \ \ \ \ \ \ \ \  \ \includegraphics[width=3.5in,keepaspectratio]{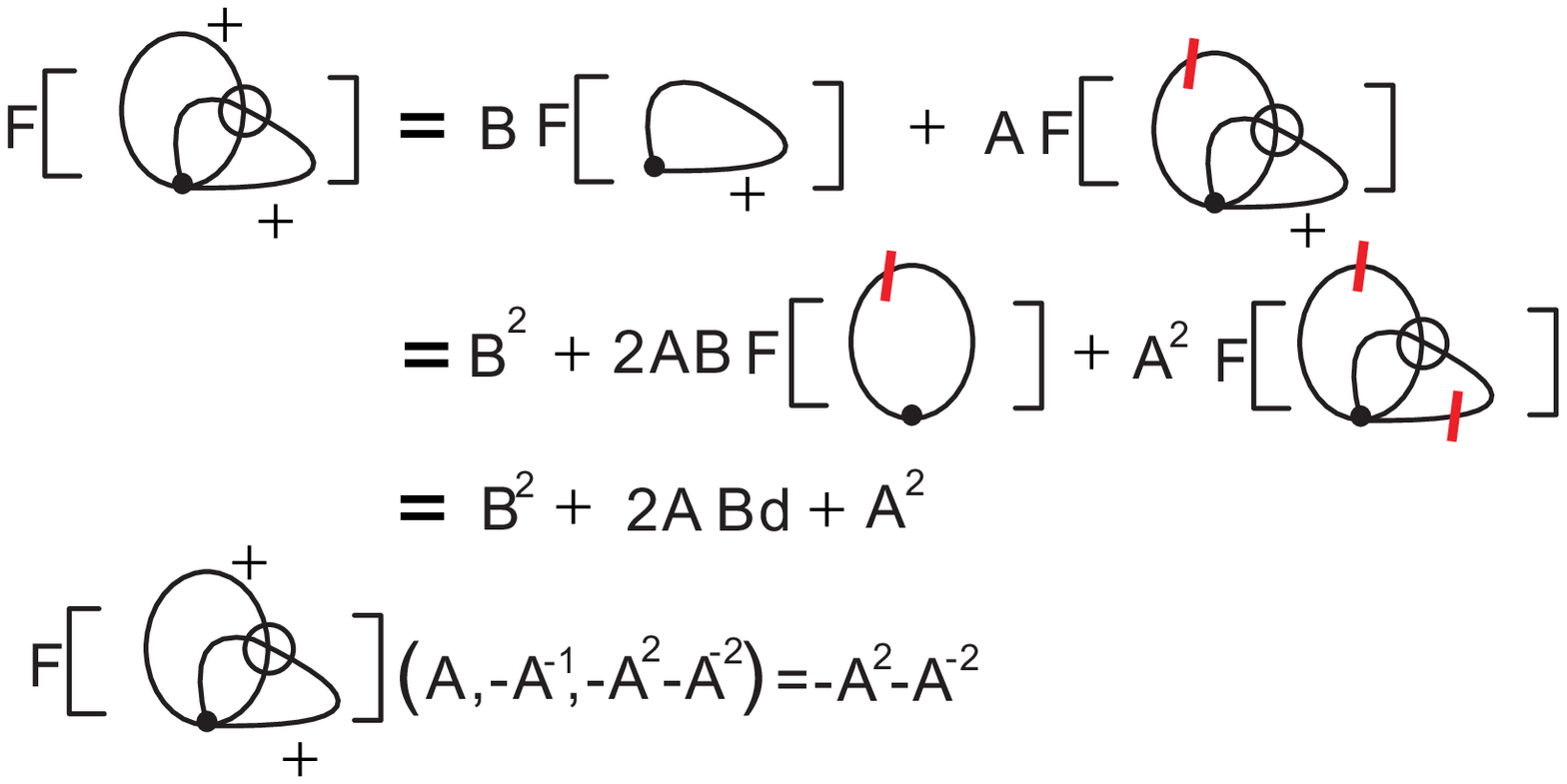}

\begin{proposition}\label{GD}
Let $G$ be a signed cyclic graph. Let $D_G$ be one of the virtual link diagrams associated with $G$ via the medial construction. Then
$$F[G](A,B,d)=\langle D_G\rangle(A,B,d).$$
\end{proposition}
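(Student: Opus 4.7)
The plan is to prove Proposition \ref{GD} by induction on the number of unmarked edges of $G$, exploiting the fact that the deletion--marking recursion in Definition \ref{def2} mirrors the Kauffman skein relation for the bracket polynomial applied crossing by crossing.

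First I would establish the base case: $G$ has no unmarked edges, so $G$ is itself a marked spanning subgraph $H$ (possibly with no edges at all). Under the medial construction, every marked edge corresponds to a crossing that has already been smoothed in the direction dictated by its sign. Thus $D_G$ is a crossingless diagram consisting of a disjoint union of closed curves. I would then verify that the number of such curves equals $bc(H)$, the number of boundary components of the ribbon graph $\textbf{H}_r$: tracing the medial along the boundary of each ribbon of $\textbf{H}_r$ shows that each boundary component of $\textbf{H}_r$ becomes exactly one closed curve in the smoothed medial. Consequently $\langle D_G\rangle=d^{bc(H)-1}=F[H]$, matching the second defining clause of $F[G]$.

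For the inductive step, I would pick any unmarked edge $e$ and let $c$ be the corresponding crossing of $D_G$ in the medial diagram. Applying the Kauffman skein relation at $c$ gives $\langle D_G\rangle=A\langle D_G^A\rangle+B\langle D_G^B\rangle$, where $D_G^A$ and $D_G^B$ are the $A$- and $B$-smoothings of $D_G$ at $c$. The key geometric step is to check, directly from the medial construction and the sign convention for crossings illustrated in Fig. \ref{vi} and Fig. \ref{ab}, that when sign($e$)$>0$ the $A$-smoothing of $c$ produces the diagram $D_{G(\overline{e})}$ (i.e., smoothing matches marking), while the $B$-smoothing produces $D_{G-e}$ (i.e., the other smoothing matches deletion); and that when sign($e$)$<0$ the two roles are swapped. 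Once this local correspondence is verified, the induction hypothesis applied to $G-e$ and $G(\overline{e})$ (each of which has one fewer unmarked edge) yields
\begin{equation*}
\langle D_G\rangle=B\langle D_{G-e}\rangle+A\langle D_{G(\overline{e})}\rangle=B\,F[G-e]+A\,F[G(\overline{e})]=F[G]
\end{equation*}
in the positive case, and the analogous identity in the negative case.

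The main obstacle is the explicit sign-bookkeeping in the inductive step: one must carefully align the convention by which positive and negative edges produce over- and under-crossings in the medial construction with the convention by which $A$- and $B$-smoothings are defined relative to those crossings. This amounts to checking a single local picture at one crossing for each sign, after which the two recursions match term-by-term; the rest is routine. A minor additional check is that the operations of deletion and marking on $G$ commute with the medial construction in the sense that $D_{G-e}$ and $D_{G(\overline{e})}$ really are the global diagrams obtained by performing the $B$- or $A$-smoothing locally at $c$ and leaving $D_G$ unchanged elsewhere, which follows because the medial construction is defined vertex-locally on the ribbon graph.
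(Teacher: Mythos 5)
Your proof is correct and follows essentially the same route as the paper, which simply asserts that the bracket recursion translates directly into the deletion--marking recursion under the medial construction; you have filled in the details (induction on unmarked edges, the sign/smoothing correspondence consistent with Lemma \ref{main1} and its remark, and the identification of the closed curves of the fully smoothed diagram with the boundary components of $\textbf{H}_r$) that the paper leaves implicit.
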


\begin{proof}
The proof follows immediately from the medial construction.
The formulas for the bracket polynomial directly translates to the recursion formulas (1)(2) for $F[G]$.
\end{proof}

Based on the above Proposition \ref{GD}, we explain that $F[G]$ is well-defined.
\\
\\
\indent Let $\mathscr{S}(G)$ denote the set of spanning subgraphs of $G$. Let $e_{\pm}(S)$ denotes the number of positive or negative edges in $S$. Denote $G-S$ the spanning subgraph of $G$ with exactly those edges of $G$ that do not belong to $S$.

Similar to bracket polynomial, for $F[G]$ we can obtain a spanning subgraph expansion:
\noindent
\begin{equation}
F[G](A,B,d)=\sum_{S\in \mathscr{S}(G)}A^{e_{+}(S)+e_{-}(G-S)}B^{e_{-}(S)+e_{+}(G-S)}d^{bc(S)-1}
\label{this}
\end{equation}

\noindent where the sum is over all (marked) spanning subgraph $S$ of $G$.

\begin{proposition}\label{lg}
Let $D$ be a checkerboard colorable virtual link diagram and $G_D$ be a signed Tait graph of $D$. Then
$$\langle D\rangle(A,B,d)=F[G_D](A,B,d).$$
\end{proposition}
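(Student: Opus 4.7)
The plan is to deduce Proposition \ref{lg} from Proposition \ref{GD} together with Corollary \ref{mcor}. Since $D$ is checkerboard colorable, Corollary \ref{mcor} tells us that $L$ is graphical from the signed Tait graph $G_D$, so there is a virtual link diagram $D_{G_D}$ obtained from $G_D$ by the medial construction. Proposition \ref{GD} then gives $F[G_D](A,B,d) = \langle D_{G_D}\rangle(A,B,d)$, and so it suffices to establish that $\langle D\rangle(A,B,d) = \langle D_{G_D}\rangle(A,B,d)$ as polynomials in the three independent variables $A$, $B$, $d$.

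To see this, I would examine the construction in the proof of Theorem \ref{main2}. There the underlying $4$-valent cyclic graph $U$ of $D$ gives a ribbon surface $\mathbf{U}_r$, the signed Tait graph $G_D$ embeds in $\mathbf{U}_r$, and $D$ is transformed into $D_{G_D}$ purely by isotopies of $\mathbf{U}_r$. Fixing a single immersion of $\mathbf{U}_r$ into the plane and pushing these surface isotopies forward, the resulting planar diagrams for $D$ and $D_{G_D}$ differ only by planar isotopy and detour moves (the moves that shuffle arcs through virtual crossings). The three-variable bracket polynomial is defined by a state sum over real-crossing resolutions and is manifestly unchanged by planar isotopy and detour moves, since virtual crossings never enter the state sum. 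Hence $\langle D\rangle = \langle D_{G_D}\rangle$, and combining with Proposition \ref{GD} yields $\langle D\rangle = F[G_D]$.

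The main obstacle is verifying the second step cleanly: one must check that the surface isotopy relating $D$ and $D_{G_D}$ preserves the bijection between real crossings of $D$ and edges of $G_D$ together with their signs (so that no bracket-changing move such as a Reidemeister I is slipped in), and that the only planar discrepancies produced on immersion are detour-type rearrangements of virtual crossings. A short alternative that avoids this geometric bookkeeping is to compare the state sum for $\langle D\rangle$ directly with the spanning subgraph expansion \eqref{this} for $F[G_D]$: an $A$-smoothing at a positive crossing and a $B$-smoothing at a negative crossing both correspond to marking the associated edge, the opposite choices correspond to deletion (so the $A$ and $B$ exponents agree with $e_+(S)+e_-(G-S)$ and $e_-(S)+e_+(G-S)$), and the closed curves produced by a state $\sigma$ are precisely the boundary components of the ribbon subgraph determined by the marked edges, giving $|\sigma| = bc(S)$ and identifying the two sums term by term.
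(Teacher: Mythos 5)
Your argument is correct and follows essentially the same route as the paper, whose proof of this proposition is simply the observation that it follows from the proof of Theorem \ref{main2} (which exhibits $D$ and $D_{G_D}$ as related by surface isotopy, hence by detour moves and planar isotopy after immersion, under which the three-variable bracket is invariant) together with Proposition \ref{GD}. You supply the bookkeeping the paper leaves implicit, and your alternative term-by-term comparison of the state sum with the spanning subgraph expansion \eqref{this} is a valid, more self-contained check of the same identity.
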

\begin{proof}
It follows from the proof process of Theorem \ref{main2} and Proposition \ref{GD}.
\end{proof}

\begin{corollary}\label{Jones}
Let $L$ be an oriented graphical virtual link. Let $D$ be a graphical diagram of $L$.
Let $G_D$ be a signed Tait graph of $D$.
Then $$V_L(t)=(-t^{-\frac{3}{4}})^{-w(D)}F[G_D](t^{-\frac{1}{4}},t^{\frac{1}{4}},-t^{-\frac{1}{2}}-t^{\frac{1}{2}}).$$
\end{corollary}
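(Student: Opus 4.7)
The plan is a short chain of substitutions using Proposition \ref{lg} and the definition of the Jones polynomial already recorded in this section; there is essentially no new content to prove, only to be assembled carefully. First I would note that the hypothesis ``$L$ is graphical'' together with Lemma \ref{main1} tells us that a graphical diagram $D$ of $L$ is automatically checkerboard colorable, so a signed Tait graph $G_D$ of $D$ genuinely exists and Proposition \ref{lg} is applicable to it.

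Next, I would write down the Kauffman normalization recalled just before the statement, namely
\[
f_L(A) \;=\; (-A^3)^{-w(D)}\,\langle D\rangle(A, A^{-1}, -(A^2+A^{-2})),
\qquad V_L(t) \;=\; f_L(t^{-1/4}).
\]
By Proposition \ref{lg}, evaluated at the three-variable specialization $B = A^{-1}$ and $d = -(A^2 + A^{-2})$, the bracket on the right may be replaced verbatim by $F[G_D](A, A^{-1}, -(A^2+A^{-2}))$; this is legitimate because Proposition \ref{lg} asserts the identity of the two polynomials as functions of the formal variables $(A,B,d)$, so any specialization is preserved.

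Finally, I would perform the substitution $A = t^{-1/4}$, which yields
\[
A^{-1} = t^{1/4},\qquad -A^2 - A^{-2} = -t^{-1/2} - t^{1/2},\qquad (-A^3)^{-w(D)} = (-t^{-3/4})^{-w(D)},
\]
and assemble these into the displayed formula for $V_L(t)$.

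The ``hard part'' is really only bookkeeping: verifying that Proposition \ref{lg} is applied in its full three-variable generality (not only at the Jones specialization), and that the writhe factor $(-A^3)^{-w(D)}$ transforms correctly under $A = t^{-1/4}$. Since both of these are immediate, I do not expect any genuine obstacle; the corollary is essentially a direct consequence of Proposition \ref{lg} combined with the standard passage from the normalized bracket to the Jones polynomial.
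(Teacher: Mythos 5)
Your proposal is correct and matches the paper's (implicit) argument: the paper states this corollary without proof precisely because it is the direct assembly of Proposition \ref{lg} with the normalization $f_K(A)=(-A^3)^{-w(D)}\langle D\rangle$ and the substitution $A=t^{-1/4}$ that you carry out. Your added remark that Lemma \ref{main1} guarantees the graphical diagram $D$ is checkerboard colorable, so that $G_D$ exists and Proposition \ref{lg} applies, is a sensible bit of bookkeeping the paper leaves tacit.
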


\begin{remark}
We can show that $F[G]$ is the specialization of signed Bollob\'{a}s-Riordan polynomial since there is a relationship between bracket polynomial of virtual link diagram $D$ with signed Bollob\'{a}s-Riordan polynomial of its signed Tait graph $\textbf{G}_r$ in \cite{Chmu07,Chmu09}.
We leave the proof to the readers.
\end{remark}

\begin{remark}
It is obvious that for a signed planar cyclic graph $G$, $F[G](A,B,d)$ is equal to signed Tutte polynomial $Q[G](A,B,d)$ since $Q[G](A,B,d)=\langle G_D\rangle(A,B,d)$ in \cite{Kau4}.
This Tutte polynomial for signed abstract graphs is invariant under graph-theoretic analogs of the second and third Reidemeister moves (see \cite{Kau5}).
Thus it is an invariant of abstract graphs for these moves.
This suggests other directions of investigation that we shall take up in another paper.
We generalize the Tutte polynomial for signed graphs to a polynomial $F[G]$ for signed cyclic graphs in this paper.
\end{remark}

\section{Acknowledgements}

This work is supported by NSFC (No. 11671336) and President¡¯s Funds
of Xiamen University (No. 20720160011). Kauffman thanks the Simons Foundation for support under Cooperative Grant Number 426075.

\end{document}